\documentclass[10pt]{amsart}
\usepackage{amsthm, amsfonts, amssymb, amsmath, amscd, enumitem}
\usepackage[all]{xy}
\usepackage[margin=3.7cm]{geometry}

\usepackage{tikz}
\usetikzlibrary{calc}

\usepackage{color}
\usepackage[normalem]{ulem}


\newcommand{\C}{\mathbb{C}}
\newcommand{\R}{\mathbb{R}}

\newcommand{\G}{\Gamma}


\newcommand{\e}{\epsilon}

\newcommand{\bd}{\partial}

\newcommand{\Hyp}{\mathbb{H}}
\newcommand{\im}{\mathrm{Im}\:}

\newtheorem{thm}{Theorem}[section]
\newtheorem{lem}[thm]{Lemma}
\newtheorem{cor}[thm]{Corollary}
\newtheorem{prop}[thm]{Proposition}

\theoremstyle{definition}
\newtheorem{defn}[thm]{Definition}
\newtheorem{rem}[thm]{Remark}

\newtheorem*{ack}{Acknowledgments}

\begin{document}
\title{Strong hyperbolicity}
\author{Bogdan Nica}
\address[B.N.]{Mathematisches Institut, Georg-August Universit\"at G\"ottingen, Germany}
\email{bogdan.nica@gmail.com}
\author{J\'an \v{S}pakula}
\address[J.S.]{Mathematical Sciences, University of Southampton, UK}
\email{jan.spakula@soton.ac.uk}
\date{\today}
\subjclass[2000]{20F67}
\keywords{Hyperbolic group, Green metric, $\mathrm{CAT}(-1)$ space, harmonic measure}

\begin{abstract}
We propose the metric notion of strong hyperbolicity as a way of obtaining hyperbolicity with sharp additional properties. Specifically, strongly hyperbolic spaces are Gromov hyperbolic spaces that are metrically well-behaved at infinity, and, under weak geodesic assumptions, they are strongly bolic as well. We show that $\mathrm{CAT}(-1)$ spaces are strongly hyperbolic. On the way, we determine the best constant of hyperbolicity for the standard hyperbolic plane $\Hyp^2$. We also show that the Green metric defined by a random walk on a hyperbolic group is strongly hyperbolic. A measure-theoretic consequence at the boundary is that the harmonic measure defined by a random walk is a visual Hausdorff measure.\end{abstract}

\maketitle

\section{Introduction}
Hyperbolicity for metric spaces, as introduced by Gromov \cite{Gro}, is a coarse notion of negative curvature which yields a very satisfactory theory. For certain analytic purposes, however, hyperbolicity by itself is not enough, and one needs certain enhancements.

After the groundbreaking work of Lafforgue \cite{Laf}, the resolution of the Baum - Connes conjecture for hyperbolic groups hinged on showing the following: every hyperbolic group $\G$ admits a geometric action on a roughly geodesic, strongly bolic hyperbolic space. Here the enhancement is \emph{strong bolicity} (see Section~\ref{sec: super}), for which one is willing to pay the small price of weakening the usual geodesic assumption. This is achieved by Mineyev and Yu in \cite{MY}, and the desired space is $\G$ itself endowed with a new metric. 

The enhancement needed in \cite{Nic} is a boundary metric structure with sharp properties. Let us say that a hyperbolic space $X$ is a \emph{good hyperbolic space} if the following hold: the Gromov product $(\cdot,\cdot)_o$ extends continuously to the bordification $X\cup \bd X$ for each basepoint $o\in X$, and there is some $\e>0$ such that $\exp(-\e\:(\cdot,\cdot)_o)$ is a metric on the boundary $\bd X$, again for each basepoint $o\in X$. For example, $\mathrm{CAT}(-1)$ spaces are good hyperbolic spaces, and one can take $\e=1$ (Bourdon \cite{Bou}). Now the construction of \cite{Nic} can be summarized as follows: from a geometric action of a hyperbolic group $\G$ on a roughly geodesic, good hyperbolic space $X$, one obtains a proper isometric action of $\G$ on an $L^p$-space associated to the double boundary $\bd X\times \bd X$. The concrete $X$ used in \cite{Nic} is $\G$ itself, equipped with a metric constructed by Mineyev \cite{Min1,Min2} and which is a slightly improved version of the metric used in \cite{MY}.

In this paper we introduce a certain metric condition that we call \emph{strong hyperbolicity}. The definition is given in Section~\ref{sec: super} and we will not repeat here. The main point about strong hyperbolicity is that there are useful consequences flowing out and important examples flowing in. 

As far as consequences are concerned, the following is a fairly easy outcome of the definition.

\begin{thm}\label{thm: A}
A strongly hyperbolic space is a good hyperbolic space. A roughly geodesic, strongly hyperbolic space is strongly bolic. 
\end{thm}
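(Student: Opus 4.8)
The plan is to work throughout with the defining inequality of strong hyperbolicity, which I take to say that for some $\e>0$ the function $d_\e(x,y)=\exp(-\e\,(x,y)_o)$ satisfies the triangle inequality on $X$ for every basepoint $o$; equivalently, the four-point inequality $\exp(-\e(x,z)_o)\le \exp(-\e(x,y)_o)+\exp(-\e(y,z)_o)$ holds for all $x,y,z$ and all $o$. For the first assertion, ordinary Gromov hyperbolicity is then immediate: bounding the right-hand side by $2\max\{\cdot,\cdot\}$ and taking logarithms converts this into $(x,z)_o\ge \min\{(x,y)_o,(y,z)_o\}-\tfrac{\log 2}{\e}$, so $X$ is $\delta$-hyperbolic with $\delta=(\log 2)/\e$.

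The remaining content of \emph{good} is the boundary metric. I would extend the Gromov product to $X\cup\bd X$ by the usual recipe $(\xi,\eta)_o=\sup\liminf(x_i,y_j)_o$ over sequences $x_i\to\xi$, $y_j\to\eta$, and observe that, $d_\e$ being an honest (pseudo)metric rather than merely a quasimetric, this $\liminf$ is in fact a genuine limit, independent of the chosen sequences and varying continuously with $\xi,\eta$. Passing to the limit in the triangle inequality for $d_\e$ then shows that $\exp(-\e(\cdot,\cdot)_o)$ is a metric on $\bd X$. I expect the continuity and well-definedness of this extension, rather than the bare triangle inequality, to be the only delicate point in the first statement.

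For the second assertion I would first record the elementary identity, valid in any metric space,
\[
d(x,z)+d(y,t)-d(x,t)-d(y,z)=2\big[(y,z)_x-(y,t)_x\big],
\]
obtained by expanding the four distances through Gromov products based at $x$. Thus strong bolicity --- the demand that the left-hand side be $\le\epsilon$ whenever $d(x,y)+d(z,t)\le r$ and the clusters $\{x,y\}$, $\{z,t\}$ are separated by at least $R$ --- reduces to showing that $|(y,z)_x-(y,t)_x|$ is small. Feeding the triple $y,z,t$ based at $x$ into the strong hyperbolicity inequality, in both orderings, yields
\[
\big|\exp(-\e(y,z)_x)-\exp(-\e(y,t)_x)\big|\le \exp(-\e(z,t)_x).
\]

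It remains to convert this multiplicative estimate into the additive bound bolicity requires, and this is where I expect the genuine work to lie. Since $d(x,y)\le r$ forces $(y,z)_x,(y,t)_x\le r$, while the separation hypothesis forces $(z,t)_x\to\infty$ as $R\to\infty$, the convexity of $u\mapsto e^{-\e u}$ (via the mean value theorem on $[0,r]$) upgrades the displayed inequality to $|(y,z)_x-(y,t)_x|\le \e^{-1}e^{\e r}\exp(-\e(z,t)_x)$, which decays exponentially in $R$; choosing $R$ so that this falls below $\epsilon/2$ delivers strong bolicity. The rough geodesic hypothesis enters to keep the setting that of Mineyev--Yu and to guarantee that the cluster-separation condition, phrased in terms of distances, indeed forces $(z,t)_x$ to be uniformly large. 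The main obstacle is thus not the inequality itself but matching the precise quantifiers of the definition of strong bolicity while passing from the exponential to the additive scale.
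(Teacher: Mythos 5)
Your hyperbolicity estimate is exactly the paper's: bound the right-hand side of the strong hyperbolicity inequality by twice the maximum and take logarithms. For the boundary (``good'') part you have located the correct mechanism --- it is precisely because $d_\e=\exp(-\e\,(\cdot,\cdot)_o)$ satisfies an honest triangle inequality that the extension to the bordification becomes a genuine, sequence-independent limit --- but you only \emph{assert} this (``observe that \dots''), and it is the one place in the first statement where something must actually be checked. The check is two lines, and it is how the paper proceeds: for $x_i\to\xi$, the triangle inequality for $d_\e$ applied in both orders gives
\begin{align*}
\big|\exp(-\e\,(z,x_i)_o)-\exp(-\e\,(z,x_j)_o)\big|\leq \exp(-\e\,(x_i,x_j)_o)\to 0,
\end{align*}
so $(z,\xi)_o:=\lim\,(z,x_i)_o$ exists; the same estimate with a second sequence $y_i\to\xi$ in place of $x_j$ shows independence of the representing sequence, iterating extends $(\cdot,\cdot)_o$ to $\bd X\times\bd X$, and the metric axioms on $\bd X$ follow by passing to the limit in the triangle inequality for $d_\e$. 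So your outline of the first assertion is the paper's argument with its key step left unexecuted; you should run the estimate above rather than flag it as ``the only delicate point.''

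For strong bolicity your route is genuinely different from the paper's, and it is correct. The paper divides the exponential four-point inequality by $\exp\big(\tfrac12\e(|x,z|+|y,t|)\big)$ to get
\begin{align*}
\exp\big(\tfrac{1}{2}\e\,(|x,t|+|y,z|-|x,z|-|y,t|)\big)\leq 1+ \exp\big(\tfrac{1}{2}\e\,(|x,y|+|z,t|-|x,z|-|y,t|)\big)\leq 1+\exp\big(\tfrac12\e\,(r-R)\big),
\end{align*}
and simply chooses $R$ with $1+\exp\big(\tfrac12\e(r-R)\big)\leq \exp\big(\tfrac12\e\eta\big)$ --- no Gromov products, no case analysis. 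Your version (the identity expressing the cross-difference as $2[(y,z)_x-(y,t)_x]$, the bound $|e^{-\e(y,z)_x}-e^{-\e(y,t)_x}|\leq e^{-\e(z,t)_x}$, and the mean value theorem on $[0,r]$) works: indeed $(y,z)_x,(y,t)_x\leq |x,y|\leq r$, and $(z,t)_x\geq \tfrac12 R-r$ under the hypotheses, so your bound decays exponentially in $R$. What it buys is an explicit exponential rate, which is essentially condition \textsc{(EB)} of Lemma~\ref{lemma EB} run in reverse; what it costs is length. One correction: rough geodesicity is \emph{not} what forces $(z,t)_x$ to be large --- that is pure triangle inequality, as just computed. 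The rough geodesic hypothesis is needed only so that strong bolicity in Lafforgue's sense reduces to the four-point statement you (and the paper) verify; Lafforgue's definition also contains an approximate-midpoint (weak geodesic) requirement, and that is what rough geodesicity supplies.
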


As for examples, we show the following.
\begin{thm}\label{thm: C}
$\mathrm{CAT}(-1)$ spaces are strongly hyperbolic.
\end{thm}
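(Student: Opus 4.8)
The plan is to establish the defining inequality of strong hyperbolicity (Section~\ref{sec: super}) with the sharp constant $\e=1$, by first settling the model plane $\Hyp^2$ and then propagating the inequality to an arbitrary $\mathrm{CAT}(-1)$ space by comparison. Unwinding the definition, for a basepoint $o$ and points $x,y,z$ the goal is to show
\[
e^{-(x,y)_o}\;\le\;e^{-(x,z)_o}+e^{-(z,y)_o}.
\]
Because each Gromov product here is a function of the six pairwise distances among $o,x,y,z$, this is a purely four-point metric statement, which is what makes a reduction to the model space feasible. Note that we aim for the genuine triangle inequality, not merely a quasi-version.

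For the model space, I would place $o$ at the centre of the Poincar\'e disk and let $X,Y$ denote the Euclidean coordinates of $x,y$, viewed as complex numbers. Starting from the law of cosines $\cosh d(x,y)=1+2|X-Y|^2/\big((1-|X|^2)(1-|Y|^2)\big)$ together with $e^{-(x,y)_o}=e^{d(x,y)/2}\,e^{-(d(o,x)+d(o,y))/2}$, a direct computation collapses to the closed form
\[
e^{-(x,y)_o}=\frac{|1-X\bar Y|+|X-Y|}{(1+|X|)(1+|Y|)}.
\]
As $|X|,|Y|\to 1$ this becomes $\tfrac12|X-Y|$, i.e.\ half the chordal distance, recovering Bourdon's visual metric and confirming that $\e=1$ is the correct constant at the boundary. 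Introducing the shorthand $N(A,B)=|1-A\bar B|+|A-B|$ and clearing denominators, the triangle inequality in $\Hyp^2$ reduces to the concrete estimate
\[
(1+|Z|)\,N(X,Y)\;\le\;(1+|Y|)\,N(X,Z)+(1+|X|)\,N(Z,Y),
\]
an elementary inequality for three points $X,Y,Z$ of the closed unit disk.

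To pass from $\Hyp^2$ to a general $\mathrm{CAT}(-1)$ space, I would fix four points $o,x,y,z$ and apply Reshetnyak's majorization theorem to the geodesic quadrilateral $o\,x\,z\,y$, ordered so that $\{x,y\}$ and $\{o,z\}$ are the two diagonals. This produces a convex comparison quadrilateral $\bar o\,\bar x\,\bar z\,\bar y$ in $\Hyp^2$ whose four sides have the same lengths as in $X$, while its two diagonals satisfy $d(\bar x,\bar y)\ge d(x,y)$ and $d(\bar o,\bar z)\ge d(o,z)$. Writing $F=e^{-(x,y)_o}-e^{-(x,z)_o}-e^{-(z,y)_o}$ as a function of the six distances, one checks that $F$ is increasing in $d(x,y)$ and increasing in $d(o,z)$ while the other four distances are held fixed --- precisely the two distances that grow under majorization. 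Hence $F$ evaluated on the comparison configuration dominates $F$ on the original one, and the planar inequality forces $F\le 0$ in $X$.

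The hard part will be the explicit planar estimate of the second paragraph: although elementary, it must be analysed sharply enough to certify that $\e=1$ is optimal, and it is exactly this sharpness analysis that yields the best hyperbolicity constant for $\Hyp^2$ advertised in the abstract. The comparison step, by contrast, is clean once the orientation of the quadrilateral is chosen so that the two diagonals are the distances in which $F$ is monotone, matching the two distances that Reshetnyak majorization enlarges. Finally, the passage to boundary points, and hence the full force of Theorem~\ref{thm: A} for $\mathrm{CAT}(-1)$ spaces, follows by continuity of the closed-form expression as $|X|,|Y|\to 1$, so no separate boundary argument is needed.
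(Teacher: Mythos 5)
Your reduction of the general $\mathrm{CAT}(-1)$ case to $\Hyp^2$ is correct, and it is in essence the paper's own reduction: the comparison quadrilateral supplied by Reshetnyak majorization (equal sides, longer diagonals) is exactly the four-point characterization of $\mathrm{CAT}(-1)$ that the paper invokes, and your monotonicity check on $F$ is the same observation that the paper makes transparent by writing strong hyperbolicity in its four-point form, where the two diagonal distances appear only on the left-hand side. Your closed formula in the disk model, $e^{-(x,y)_o}=\big(|1-X\bar Y|+|X-Y|\big)/\big((1+|X|)(1+|Y|)\big)$, is also correct. But the proof stops exactly where the content of the theorem begins: the planar inequality
\[
(1+|Z|)\,N(X,Y)\;\le\;(1+|Y|)\,N(X,Z)+(1+|X|)\,N(Z,Y)
\]
is asserted, labelled ``the hard part'', and never proved. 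This inequality \emph{is} Proposition~\ref{hyp plane}; leaving it as an unproven ``elementary estimate'' means the theorem has not been established at all.

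The paper closes precisely this step with a Ptolemy argument, carried out in the upper half-plane model where the structure is more visible: there $e^{\frac12|w_1,w_2|}=\|w_1,w_2\|/\big(2\sqrt{\im w_1}\sqrt{\im w_2}\big)$ with $\|w_1,w_2\|=|w_1-w_2|+|w_1-\overline{w}_2|$, so the four-point form of strong hyperbolicity with parameter $1$ becomes the Ptolemy inequality for $\|\cdot,\cdot\|$; this holds because $\|\cdot,\cdot\|$ is twice the average of the Euclidean distance over the two-element group generated by reflection in the real axis, and ptolemaicity is preserved when a ptolemaic metric is averaged over a finite isometry group. Your disk inequality is in fact the Ptolemy inequality for $N(A,B)=|1-A\bar B|+|A-B|$ specialized to a fourth point at the origin (note $N(X,0)=1+|X|$), so it would follow from the same argument after a Cayley transform; but some such argument must actually be supplied, and attacking your three-point form directly hides the four-point Ptolemy structure that makes the proof work. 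Finally, one misconception: no ``sharpness analysis'' of the planar estimate is needed. The theorem only requires the inequality to hold with $\e=1$, and the optimality of the resulting $\log 2$-hyperbolicity of $\Hyp^2$ (Corollary~\ref{cor: optimal}) is obtained in the paper by a separate, easy computation with three boundary points on $\R$, not by sharpening this inequality.
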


\begin{thm}\label{thm: G}
The Green metric arising from a random walk on a hyperbolic group is strongly hyperbolic.
\end{thm}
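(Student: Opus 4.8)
The plan is to fix the standing assumptions under which the Green metric is a genuine metric and then reduce strong hyperbolicity to a single inequality among hitting probabilities. Let $\G$ be non-elementary hyperbolic and let $\mu$ be a symmetric, finitely supported probability measure whose support generates $\G$; write $F(x,y)$ for the probability that the $\mu$-random walk started at $x$ ever reaches $y$, and $d_G(x,y)=-\log F(x,y)$ for the Green metric. Symmetry of $\mu$ makes $F$ symmetric, the strong Markov property gives the supermultiplicativity $F(x,z)\ge F(x,y)F(y,z)$ (hence the triangle inequality $d_G(x,z)\le d_G(x,y)+d_G(y,z)$), and transience supplies positivity. Thus $d_G$ is a left-invariant metric, and by Blach\`ere--Ha\"issinsky--Mathieu it is hyperbolic and quasi-isometric to a word metric, so in particular $\delta_G$-hyperbolic for some $\delta_G$. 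The goal is to produce $\e>0$ for which $d_G$ satisfies the defining four-point inequality of strong hyperbolicity.

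The next step is the reduction. By left-invariance it suffices to test the four-point inequality with the basepoint at the identity $e$; unwinding $d_G=-\log F$ and setting $\alpha=\e/2$, strong hyperbolicity becomes the assertion that for all $x,y,z$,
\[ A^{\alpha}+B^{\alpha}\ge 1,\qquad A=\frac{F(x,z)\,F(y,e)}{F(x,y)\,F(z,e)},\quad B=\frac{F(x,z)\,F(y,e)}{F(x,e)\,F(y,z)}. \]
This is exactly the triangle inequality for $\exp(-\e(\cdot,\cdot)_e)$, i.e.\ the substance of being a good hyperbolic space. The guiding model is a tree (free group, nearest-neighbour walk): there $F(x,y)=\rho^{\,d(x,y)}$ \emph{exactly}, because any intermediate vertex separates its endpoints, so $d_G$ is a scalar multiple of the tree metric. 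Then $-\log A$ and $-\log B$ are the two differences of pairwise distance-sums, and $0$-hyperbolicity forces one of them to be $\le 0$, whence $\max\{A,B\}\ge 1$ and the inequality holds for every $\e$.

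For a general hyperbolic group exact multiplicativity fails, and its sharp replacement is Ancona's inequality: there is $C\ge 1$ with $F(x,y)F(y,z)\le F(x,z)\le C\,F(x,y)F(y,z)$ whenever $y$ lies within bounded distance of a geodesic from $x$ to $z$. This is precisely the tool that controls $A$ and $B$ two-sidedly. Combining it with $\delta_G$-hyperbolicity, I would split into cases according to which of the three pairwise $d_G$-sums is largest: when the sum appearing in both $A$ and $B$ is not the largest, one of $A,B$ is $\ge 1$ and we are done; the remaining case is where that sum dominates, one competitor lies within $2\delta_G$ of it, and the third is far smaller, so that $A$ is bounded below while $B$ may be tiny.

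That remaining case is the main obstacle, and it is a genuine one: strong hyperbolicity is an \emph{exact} inequality, and in the near-tree configurations it is tight (both sides limit to $1$), so a merely approximate multiplicativity with a fixed constant $C>1$ can push $A^{\alpha}$ strictly below $1$ while $B^{\alpha}$ is negligible, breaking the bound. The resolution I anticipate has two ingredients. First, choose $\e$ small: as $\alpha\to 0$ each of $A^{\alpha},B^{\alpha}\to 1$, buying room to absorb bounded defects. Second, upgrade Ancona's estimate to its sharp asymptotic form, in which the multiplicativity defect $F(x,z)/\big(F(x,y)F(y,z)\big)$ tends to $1$ as the intermediate point recedes along the geodesic (in the spirit of the strong Ancona inequalities of Gou\"ezel and Gou\"ezel--Lalley), so that exactly in the tight regime, where the relevant points lie far out along geodesics, the exponential defect is controlled well enough to keep $A^{\alpha}+B^{\alpha}\ge 1$. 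Making this trade-off quantitative, so that a single $\e>0$ works uniformly over all configurations, is the crux of the argument.
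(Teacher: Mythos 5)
You have correctly identified both the strategy and the decisive external input: strong hyperbolicity of the Green metric cannot follow from the classical Ancona inequalities (a fixed multiplicative constant $C>1$ is fatal for an inequality that becomes tight in tree-like configurations), and what is needed instead is Gou\"ezel's strong form, in which the multiplicativity defect decays as the configuration spreads out. This is exactly the paper's route. However, your proposal stops at the point you yourself call ``the crux'': you never make the trade-off between small $\e$, the hyperbolicity constant, and the decay in the strong Ancona inequality quantitative and uniform, so what you have is a plan rather than a proof. The paper's resolution of this crux is Lemma~\ref{lemma EB}: a hyperbolic space is strongly hyperbolic if and only if it satisfies the exponential bolicity condition \textsc{(EB)}, namely $\big||x,y|+|z,t|-|x,t|-|z,y|\big|\leq L\exp(-\lambda R)$ whenever $|x,y|+|z,t|-|x,z|-|y,t|\geq R\geq R_0$. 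Once this is isolated, the trade-off is a short computation valid uniformly over all configurations: writing $A\geq B\geq 0$ for the two half-differences, the target $1\leq \exp(-\e A)+\exp(-\e B)$ reduces, via $a\leq -\log(1-a)$, to $\e B\leq \exp(-\e A)$; when $A\geq R_0/2$ this follows from \textsc{(EB)} provided $\e\leq\min\{2\lambda,2/L\}$, and when $A<R_0/2$ hyperbolicity gives $B\leq\delta$, so $\e\leq\min\{1/(2\delta),(2\log 2)/R_0\}$ suffices. Your two ingredients (small $\e$, sharp Ancona) are the right ones, but without this lemma they remain unassembled, and your case analysis by which pairwise sum dominates is precisely what the lemma packages.

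There is a second gap, which your sketch does not acknowledge at all. The strong Ancona inequality (Lemma~\ref{key est}) is conditioned on a geometric hypothesis in the \emph{word} metric: the distance between word geodesics $[x,z]$ and $[y,t]$ is at least $R$. The condition \textsc{(EB)} for the Green metric, by contrast, has as hypothesis a lower bound on the purely metric quantity $|x,y|_G+|z,t|_G-|x,z|_G-|y,t|_G$. These must be linked: the paper proves $C\,\mathrm{dist}\big([x,z],[y,t]\big)+C\geq |x,y|_G+|z,t|_G-|x,z|_G-|y,t|_G$ for a spatial constant $C$, using Ancona's theorem that word geodesics are rough geodesics for the Green metric together with the quasi-isometry of the two metrics. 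Only after this comparison can Gou\"ezel's estimate, rewritten (via $G\propto F$ and $|\log a|\sim|a-1|$ as $a\to 1$) as $\big||x,y|_G+|z,t|_G-|x,t|_G-|z,y|_G\big|\leq L\exp(-\lambda R)$, be converted into \textsc{(EB)}. Your phrase ``exactly in the tight regime, where the relevant points lie far out along geodesics'' asserts this link but does not supply it.
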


Theorem~\ref{thm: C} implies Bourdon's result mentioned above, but also one of the main results of Foertsch and Schroeder from \cite{FS}. From Theorem~\ref{thm: C} and its proof, we also deduce that the hyperbolic plane $\Hyp^2$ is $\log 2$-hyperbolic, in the sense of Gromov's original definition, and this is optimal (Corollary~\ref{cor: optimal}). Quite surprisingly, this was not known before. 

Good hyperbolicity for the Green metric on a hyperbolic group answers positively a question raised in \cite[Remark 19]{Nic}. As a consequence, we find that, on the boundary, the harmonic measure defined by a random walk coincides with the normalized Hausdorff measure defined by a Green visual metric (Corollary~\ref{cor: two equal measures}). Strong bolicity of the Green metric was recently proved by Ha\"issinsky and Mathieu in \cite{HM} by a different argument. The upshot is that the Green metric provides a more natural, and much simpler alternative to the hat metric constructed by Mineyev and Yu in \cite{MY}. Strong hyperbolicity for the Green metric, established herein, makes a similar point relative to Mineyev's improved hat metric from \cite{Min1,Min2}.

\section{Hyperbolic spaces - An elliptical reminder} 
\subsection{Hyperbolic spaces and their boundaries} We begin by recalling some basic notions and facts.

\begin{defn}[Gromov]\label{defn: gromov hyperbolic}
A metric space $X$ is \emph{$\delta$-hyperbolic}, where $\delta\geq 0$, if
\begin{align*}
	(x,y)_o \geq \min\{(x,z)_o,(y,z)_o\} - \delta
\end{align*}
for all $x,y,z,o\in X$; equivalently, the four-point condition
\begin{align*}
|x,y|+|z,t|\leq \max\big\{|x,z|+|y,t|, |z,y|+|x,t|\big\}+2\delta
\end{align*}
holds for $x,y,z,t\in X$.
\end{defn}

We write $|x,y|$ for the distance between two points $x,y\in X$. Recall that the \emph{Gromov product} is defined by the formula 
\begin{align*}
(x,y)_o = \tfrac{1}{2}\big(|x,o|+|y,o|-|x,y|\big).
\end{align*}

Let $X$ be a hyperbolic space, and fix a basepoint $o\in X$. A sequence $(x_i)$ in $X$ \emph{converges to infinity} if $(x_i,x_j)_o\to\infty$ as $i,j\to\infty$. Two sequences  $(x_i)$, $(y_i)$ converging to infinity are declared to be equivalent if $(x_i,y_i)_o\to\infty$ as $i\to\infty$. The \emph{(Gromov) boundary} of $X$, denoted $\partial X$, is the set of equivalence classes of sequences converging to infinity. We also say that a sequence $(x_i)$ converging to infinity \emph{converges to} its equivalence class representing a point in $\partial X$. Since a change of basepoint modifies the Gromov product by a uniformly bounded amount, all these notions are actually independent of the chosen $o\in X$. 

When $X$ is a $\mathrm{CAT}(-1)$ space, that is, a metric space of actual negative curvature, the Gromov product can be extended from $X$ to the bordification $X\cup \bd X$ by setting $(\xi,\xi')_o:=\lim\; (x_i,x'_i)_o$ for $x_i\to\xi$ and $x'_i\to\xi'$, respectively $(x,\xi)_o:=\lim\; (x,x_i)_o$ for $x_i\to\xi$. But for a general hyperbolic space $X$, such a process is not well-defined in general: the limit may not exist, and even if it does, it might depend on the choice of a representing sequence. The traditional procedure of getting around this problem runs as follows. The Gromov product on $\bd X\times \bd X$ is defined by setting $(\xi,\xi')_o:=\inf \{\liminf\; (x_i,x'_i)_o: \; x_i\to\xi,\; x'_i\to\xi'\}$. This is a finite quantity, except for $(\xi,\xi)_o=\infty$. Similarly, the Gromov product on $X\times \bd X$ is defined by $(x,\xi)_o:=\inf \{\liminf\; (x,x_i)_o: \; x_i\to\xi\}$. It turns out that the other reasonable choices, namely taking sup instead of inf, or $\limsup$ instead of $\liminf$, are all uniformly close to each other. Although rather ad-hoc, such an extension of the Gromov product has at least the benefit of being canonical at the topological level: it defines a topology on the bordification $X\cup \bd X$, for which convergence at infinity takes on an actual meaning. If $X$ is proper then both $X\cup \bd X$ and $\bd X$ are compact.

It turns out that the topology on $\bd X$ can be metrized. Namely, for small enough $\e>0$ there exists a metric $d_\e$ on $\partial X$, such that $d_\e$ is within constant positive multiples of $\exp(-\e\:(\cdot,\cdot)_o)$. When $X$ is a $\mathrm{CAT}(-1)$ space, $\exp(-\:(\cdot,\cdot)_o)$ is already a metric on $\bd X$, but this is not true in general \cite[Lemma 7]{Min2}. To summarize, the fuzziness of the Gromov product at the boundary is resolved by a non-canonical choice, which in turn leads to a quasified metric structure on the boundary.

\subsection{Roughly geodesic spaces}
The classical theory of hyperbolic spaces works under the assumption that the spaces are geodesic. But the theory is just as solid for roughly geodesic spaces. These are defined as follows.

\begin{defn}
A metric space $X$ is \emph{roughly geodesic} if there is a constant $C\geq 0$ such that the following holds: for every $x,y\in X$, there is a (not necessarily continuous) map $\gamma: [a,b]\to X$ such that $\gamma(a)=x$, $\gamma(b)=y$, and $|s-s'|-C\leq |\gamma(s),\gamma(s')|\leq |s-s'|+C$ for all $s,s'\in [a,b]$. 
\end{defn}  

The viewpoint that roughly geodesic spaces provide a natural relaxation of the geodesic assumption for hyperbolic spaces was first advocated by Bonk and Schramm in \cite{BS}. We remark that the notion of weakly geodesic space used by Lafforgue \cite{Laf} and Mineyev - Yu \cite{MY}, although weaker in general, is equivalent to the notion of roughly geodesic space within the hyperbolic world.

Hyperbolicity is a quasi-isometry invariant for roughly geodesic spaces, and the Schwarz - Milnor lemma holds in the roughly geodesic context. So we may say that a group is \emph{hyperbolic} if it admits a geometric action (that is, isometric, proper, and cocompact) on a roughly geodesic, proper hyperbolic space.

Throughout this paper, hyperbolic groups are assumed to be non-elementary.

\section{Good hyperbolic spaces}\label{sec: good spaces}
Let us recall the definition we made in the Introduction, this time in a quantified form.

\begin{defn} We say that a hyperbolic space $X$ is \emph{$\e$-good}, where $\e>0$, if the following two properties hold for each basepoint $o\in X$:
\begin{itemize}
\item the Gromov product $(\cdot,\cdot)_o$ on $X$ extends continuously to the bordification $X\cup \bd X$, and 
\item $\exp(-\e \:(\cdot,\cdot)_o)$ is a metric on the boundary $\bd X$.
\end{itemize}
\end{defn}

Let $X$ be a good hyperbolic space. The continuity of the Gromov product implies that we have a well-defined notion of Busemann function
\begin{align*}
\beta(y,z;\xi)=2(\xi, z)_y-|y,z|=\lim_{x\to \xi} \big(|x,y|-|x,z|\big)\qquad (y,z\in X, \xi\in \bd X).\end{align*}

Incidentally, it can be checked that the Busemann boundary of $X$ coincides with the Gromov boundary $\bd X$ when $X$ is proper. (We owe this remark to Peter Ha\"issinsky.) This topological consequence can be very useful - see, for instance, \cite{Bjo} and \cite{HM} - but we will not need it here.

For the purposes of this paper, the main feature of a good hyperbolic space is its \emph{conformal structure} on the boundary - as opposed to mere quasi-conformality for general hyperbolic spaces. This is discussed in distinct but related ways in \cite[Sections 7 and 8]{Min2}, \cite[Section 5.1]{HaiB}, \cite[Section 4]{Nic}. See also \cite[Sections 2.6 and 2.7]{Bou} for the $\mathrm{CAT}(-1)$ case. Let $g$ be an isometry of $X$. For all $x,x'\in X$ we have that
\begin{align*}
-2(gx,gx')_o=|x,o|-|x,g^{-1}o|+|x',o|-|x',g^{-1}o|-2(x,x')_o.
\end{align*}
Letting $x\to \xi \in \bd X$ respectively $x'\to \xi' \in \bd X$, and using the notation $d_{\e,o}$ for the visual metric $\exp(-\e \:(\cdot,\cdot)_o)$ on $\bd X$, we obtain the relation
\begin{align*}
d^{\:2}_{\e,o}(g\xi,g\xi')=\exp\big(\e\:\beta(o,g^{-1}o;\xi)\big)\exp\big(\e\:\beta(o,g^{-1}o;\xi')\big)\:d^{\:2}_{\e,o}(\xi,\xi')
\end{align*}
for all $\xi,\xi'\in \bd X$. This is a strong form of \emph{metric conformality} for the action of $\mathrm{Isom}(X)$ on $\bd X$. Minor additional assumptions on $\bd X$ \cite[Lemma 8]{Nic} then guarantee the following \emph{measure-theoretic conformality}:
\begin{align*}
\frac{\mathrm{d}g^*\mu_o}{\mathrm{d}\mu_o}(\xi) =\exp\big(\e D_\e\:\beta(o,g^{-1}o;\xi)\big)
\end{align*}
where $D_\e$ is the Hausdorff dimension of $d_{\e,o}$ (independent of the basepoint $o$), and $\mu_o$ is the Hausdorff measure defined by $d_{\e,o}$ (independent of the visual parameter $\e$). 

For applications, the most interesting case is when $X$ is a proper, roughly geodesic, good hyperbolic space endowed with a geometric action of a (hyperbolic) group $\G$. By the above discussion, $\G$ acts conformally on $\bd X$.

\section{Strongly hyperbolic spaces}\label{sec: super}

\begin{defn} We say that a metric space $X$ is \emph{strongly hyperbolic} with parameter $\e>0$ if
\begin{align*}
\exp(-\e\: (x,y)_o)\leq \exp(- \e\: (x,z)_o)+\exp(-\e\:(z,y)_o).
\end{align*}
for all $x,y,z,o\in X$; equivalently, the four-point condition 
\begin{align*}
\exp\big(\tfrac{1}{2}\e\:(|x,y|+|z,t|)\big)\leq \exp\big(\tfrac{1}{2}\e\:(|x,z|+|y,t|)\big)+\exp \big(\tfrac{1}{2}\e\:(|z,y|+|x,t|)\big)
\end{align*}
holds for all $x,y,z,t\in X$.
\end{defn}
One motivation for considering this notion is that it grants the enhancements of hyperbolicity that we have mentioned. Here is the quantified version of Theorem~\ref{thm: A}.

\begin{thm}\label{properties}
Let $X$ be a strongly hyperbolic space, with parameter $\e$. Then $X$ is an $\e$-good, $(\log 2)/\e$-hyperbolic space. Furthermore, $X$ is strongly bolic provided $X$ is roughly geodesic. 
\end{thm}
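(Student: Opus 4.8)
The plan is to establish the three assertions in turn, extracting each directly from the defining inequality $\exp(-\e(x,y)_o)\le \exp(-\e(x,z)_o)+\exp(-\e(z,y)_o)$ and its four-point reformulation. For the hyperbolicity estimate I would bound the right-hand side crudely by $2\exp(-\e\min\{(x,z)_o,(z,y)_o\})$; taking logarithms and dividing by $\e$ converts this into $(x,y)_o\ge \min\{(x,z)_o,(z,y)_o\}-(\log 2)/\e$, which is exactly Gromov's inequality with $\delta=(\log 2)/\e$. This step is immediate and uses nothing further.

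For $\e$-goodness, write $\rho_o(x,y):=\exp(-\e(x,y)_o)$ and note that strong hyperbolicity is precisely the statement that $\rho_o$ obeys the triangle inequality on $X$. It is not a metric, since $\rho_o(x,x)=\exp(-\e|x,o|)\neq 0$, but this defect is harmless at infinity, and the triangle inequality is all I need. The key step is to extend $\rho_o$ continuously to the bordification. If $x_i\to\xi$ then $(x_i,x_j)_o\to\infty$, so $\rho_o(x_i,x_j)\to 0$; chaining the triangle inequality twice gives, for $x_i\to\xi$ and $y_j\to\xi'$, the estimate $|\rho_o(x_i,y_j)-\rho_o(x_{i'},y_{j'})|\le \rho_o(x_i,x_{i'})+\rho_o(y_j,y_{j'})\to 0$. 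Hence $\rho_o(x_i,y_j)$ is Cauchy, its limit is independent of the representing sequences (equivalent sequences satisfy $\rho_o(x_i,x'_i)\to 0$), and I may set $\rho_o(\xi,\xi'):=\lim \rho_o(x_i,y_j)$, and likewise on $X\times\bd X$. This furnishes the continuous extension of the Gromov product. Passing the triangle inequality to the limit shows that $\rho_o$ restricts to a genuine metric $d_{\e,o}$ on $\bd X$: symmetry is clear, the triangle inequality survives, and $d_{\e,o}(\xi,\xi')=0$ holds exactly when $(x_i,y_i)_o\to\infty$, that is, when $\xi=\xi'$. Both clauses of $\e$-goodness are then in hand.

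For strong bolicity I would use the four-point form. Recall that strong bolicity asks: for every $R>0$ and $\eta>0$ there is $N>0$ such that $|x,y|+|z,t|\le R$ and $|x,z|+|y,t|\ge N$ force $\big||x,z|+|y,t|-|x,t|-|y,z|\big|\le \eta$. Writing $A=|x,y|+|z,t|$, $B=|x,z|+|y,t|$, $C=|x,t|+|y,z|$, the target is exactly $|B-C|\le\eta$, and the defining inequality reads $e^{\e A/2}\le e^{\e B/2}+e^{\e C/2}$. Since it holds for every labeling of the four points, permuting the three pairings yields all three cyclic inequalities, so $e^{\e A/2},e^{\e B/2},e^{\e C/2}$ are the sides of a (possibly degenerate) triangle. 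Taking the inequality that isolates the larger of $B,C$ (say $B\ge C$, so $e^{\e B/2}\le e^{\e A/2}+e^{\e C/2}$) and rearranging gives $e^{\e(B-C)/2}\le 1+e^{\e(A-C)/2}$, whence $B-C\le \tfrac{2}{\e}\log\big(1+e^{\e(A-C)/2}\big)$. Now $A\le R$, while $C$ is large once $B$ is, because $|B-C|\le 2|x,y|\le 2R$; thus $A-C\to-\infty$ and the bound tends to $0$. Choosing $N$ so that it drops below $\eta$ gives the uniform defect estimate. The rough-geodesic hypothesis enters only to supply the weak-geodesic ingredient built into the notion of strong bolicity; the quantitative estimate itself is pure strong hyperbolicity.

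The one genuinely delicate point is the boundary extension in the $\e$-goodness step. For a general hyperbolic space the Gromov product need not extend continuously, and it is precisely the triangle inequality for $\rho_o$—unavailable without strong hyperbolicity—that forces the relevant sequences to be $\rho_o$-Cauchy and makes the limit independent of representatives. Once that is secured, both the continuity and the metric property follow at once, and the remaining assertions reduce to elementary manipulation of the single exponential inequality.
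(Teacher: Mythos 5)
Your proposal is correct and follows essentially the same route as the paper: the crude bound $\exp(-\e(x,y)_o)\leq 2\exp(-\e\min\{(x,z)_o,(z,y)_o\})$ for hyperbolicity, the triangle inequality for $\exp(-\e(\cdot,\cdot)_o)$ to get the Cauchy/well-definedness argument at the bordification, and a rearrangement of the four-point inequality against Lafforgue's criterion for strong bolicity. The only cosmetic differences are that you prove the two-sided bolicity bound via a case analysis on which pairing is largest, where the paper settles for the one-sided inequality its stated criterion requires, and you spell out the verification that the boundary function is a genuine metric, which the paper leaves implicit.
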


The proof is straightforward. We include the details.

\begin{proof}
From 
\begin{align*}
\exp(-\e\: (x,y)_o)&\leq 2\max\big\{\exp(- \e\: (x,z)_o),\exp(-\e\:(z,y)_o)\big\}\\
&=2\exp\big(- \e\: \min\{(x,z)_o,(z,y)_o\}\big)
\end{align*}
we see that $X$ is $(\log 2)/\e$-hyperbolic.

Next we check that the Gromov product $(\cdot,\cdot)_o$ extends continuously to the bordification $\overline{X}=X\cup \bd X$; it will then follow that $\exp(-\e \:(\cdot,\cdot)_o)$ is a metric on $\bd X$. Let $z\in X$ and $\xi\in \bd X$. If $(x_i)$ is a sequence in $X$ converging to $\xi$, then 
\begin{align*}
\big|\exp(-\e\: (z,x_i)_o)-\exp(- \e\: (z,x_j)_o)\big|\leq\exp(-\e\:(x_i,x_j)_o)\to 0 \quad \textrm{ as } i,j\to \infty 
\end{align*}
so we may put $(z,\xi)_o:=\lim\: (z,x_i)_o$. To see that this is well-defined, let $(y_i)$ be another sequence in $X$ converging to $\xi$; then 
\begin{align*}
\big|\exp(-\e\: (z,x_i)_o)-\exp(- \e\: (z,y_i)_o)\big|\leq\exp(-\e\:(x_i,y_i)_o)\to 0 \quad \textrm{ as } i\to \infty. 
\end{align*}
Thus, we have a continuous extension of $(\cdot,\cdot)_o$ to $X\times \overline{X}$. Similar arguments lead to a continuous extension of $(\cdot,\cdot)_o$ to $\overline{X}\times \overline{X}$. 

For hyperbolic spaces which are roughly geodesic, strong bolicity in the sense of Lafforgue \cite{Laf} amounts to the following: for every $\eta, r>0$ there exists $R>0$ such that $|x,y|+|z,t|\leq r$ and $|x,z|+|y,t|\geq R$ imply $|x,t|+|y,z|\leq |x,z|+|y,t|+\eta$. Since
\begin{align*}
\exp\big(\tfrac{1}{2}\e\:(|x,t|+|y,z|-|x,z|-|y,t|)\big)\leq 1+ \exp\big(\tfrac{1}{2}\e\:(|x,y|+|z,t|-|x,z|-|y,t|)\big)
\end{align*}
it suffices to choose $R>0$ such that $1+\exp \tfrac{1}{2}\e(r-R)\leq \exp\tfrac{1}{2}\e \eta$.
\end{proof}

The next two sections are devoted to interesting examples of strongly hyperbolic spaces.

\section{Strong hyperbolicity for $\mathrm{CAT}(-1)$ spaces}
It is known that $\mathrm{CAT}(-1)$ spaces are strongly bolic and good hyperbolic spaces. The next result strengthens and unifies these facts.

\begin{thm}\label{thm: Cat minus one} 
$\mathrm{CAT}(-1)$ spaces are strongly hyperbolic with parameter $1$.
\end{thm}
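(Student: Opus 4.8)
The plan is to read the four-point condition (with parameter $1$) as the \emph{Ptolemy inequality} for the exponentiated kernel $\tilde d(p,q)=\exp(\tfrac12|p,q|)$. Written out, the condition to prove is
\[
\tilde d(x,y)\,\tilde d(z,t)\leq \tilde d(x,z)\,\tilde d(y,t)+\tilde d(x,t)\,\tilde d(y,z)
\]
for all $x,y,z,t\in X$, so the goal becomes: every $\mathrm{CAT}(-1)$ space is Ptolemaic for $\tilde d$. I would establish this in two movements — first reduce an arbitrary $\mathrm{CAT}(-1)$ space to the model plane $\Hyp^2$, then settle $\Hyp^2$ by an explicit computation.

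For the reduction, fix four points and regard them as the vertices, in cyclic order $x,z,y,t$, of a geodesic quadrilateral $Q$ in $X$; note that the diagonals of $Q$ are precisely the pairs $\{x,y\}$ and $\{z,t\}$ carrying the left-hand side, while the four sides $\{x,z\},\{z,y\},\{y,t\},\{t,x\}$ carry the right-hand side. By Reshetnyak's majorization theorem the closed curve $\partial Q$ is majorized by a convex region $C\subset\Hyp^2$ through a map that is arclength-preserving on the boundary and $1$-Lipschitz. Hence the comparison vertices $\bar x,\bar z,\bar y,\bar t\in\Hyp^2$ carry the \emph{same} side lengths, whereas the diagonals can only grow: $|x,y|\leq|\bar x,\bar y|$ and $|z,t|\leq|\bar z,\bar t|$. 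Since the left-hand side is increasing in the diagonal lengths and the right-hand side depends only on the unchanged side lengths, the inequality in $X$ follows immediately from the one for $\bar x,\bar z,\bar y,\bar t$ in $\Hyp^2$. This step is routine and loses nothing in the parameter.

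The heart of the matter is therefore the Ptolemy inequality for $\tilde d$ on $\Hyp^2$, and this is the step I expect to be delicate. Working in the upper half-plane model, a direct computation from $\cosh|p,q|=1+|p-q|^2/(2\,\im p\,\im q)$ gives the clean formulas
\[
\sinh\tfrac12|p,q|=\frac{|p-q|}{2\sqrt{\im p\,\im q}},\qquad \cosh\tfrac12|p,q|=\frac{|p-\bar q|}{2\sqrt{\im p\,\im q}},
\]
where $\bar q$ denotes the Euclidean reflection of $q$ across the real axis and $|\,\cdot\,|$ is the Euclidean modulus. Adding them yields $\tilde d(p,q)=\big(|p-q|+|p-\bar q|\big)\big/\big(2\sqrt{\im p\,\im q}\big)$. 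The crucial bookkeeping is that each term of the Ptolemy inequality involves every one of the four points exactly once, so the weights $\sqrt{\im(\cdot)}$ cancel and the whole statement collapses to a single inequality among the Euclidean distances between $x,y,z,t$ and their reflections $\bar x,\bar y,\bar z,\bar t$.

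To finish, I would expand the product $(|x-y|+|x-\bar y|)(|z-t|+|z-\bar t|)$ into four summands and bound each by one instance of the classical Euclidean Ptolemy inequality $|AC|\,|BD|\leq|AB|\,|CD|+|AD|\,|BC|$, applied to the quadruples $\{x,y,z,t\}$, $\{x,y,z,\bar t\}$, $\{x,\bar y,z,t\}$ and $\{x,\bar y,z,\bar t\}$ respectively. Summing the four bounds controls the left-hand side by a combination in which two of the eight right-hand Euclidean products occur with multiplicity two while two others are missing; the net surplus is exactly $\big(|x-\bar z|-|x-z|\big)\big(|y-t|+|y-\bar t|\big)$, which is nonnegative by the elementary monotonicity $|p-\bar q|\geq|p-q|$ (reflection across the real axis only increases Euclidean distance). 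The main obstacle is organizing this expansion so that the four Ptolemy applications and the monotonicity step dovetail precisely; once they do, the inequality holds with constant $1$. A single degenerate configuration of four points then shows $1$ cannot be improved, which is what feeds Corollary~\ref{cor: optimal}.
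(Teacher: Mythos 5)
Your proposal is correct and takes essentially the same route as the paper: reduce to $\Hyp^2$ by quadrilateral comparison (the paper quotes the $\mathrm{CAT}(-1)$ four-point condition, you invoke Reshetnyak majorization, which yields exactly that condition), then verify the inequality in the upper half-plane via the formula $\exp\big(\tfrac12|w_1,w_2|\big)=\big(|w_1-w_2|+|w_1-\overline{w}_2|\big)\big/\big(2\sqrt{\im w_1}\sqrt{\im w_2}\big)$ and several applications of the Euclidean Ptolemy inequality. The only divergence is bookkeeping in the last step --- your four Ptolemy instances for $\{x,y,z,t\}$, $\{x,y,z,\overline{t}\}$, $\{x,\overline{y},z,t\}$, $\{x,\overline{y},z,\overline{t}\}$ plus the monotonicity $|x-z|\leq |x-\overline{z}|$, versus the paper's full symmetrization by averaging over reflections of all but one variable --- and your computation checks out.
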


Let us recall the four-point characterization of the $\mathrm{CAT}(-1)$ property for a geodesic space $X$. Informally, every quadrilateral in $X$ has a comparison quadrilateral in the hyperbolic plane $\Hyp^2$ whose diagonals are at least as long as the diagonals of the original quadrilateral in $X$. Formally, for every choice of points $x_1,x_2,x_3,x_4\in X$, there are points $\tilde{x}_1,\tilde{x}_2,\tilde{x}_3,\tilde{x}_4\in \Hyp^2$ such that 
\begin{align*}
|x_1,x_2|=|\tilde{x}_1,\tilde{x}_2|,\quad |x_2,x_3|=|\tilde{x}_2,\tilde{x}_3|,\quad |x_3,x_4|=|\tilde{x}_3,\tilde{x}_4|,\quad |x_4,x_1|=|\tilde{x}_4,\tilde{x}_1|
\end{align*}
while $|x_1,x_3|\leq|\tilde{x}_1,\tilde{x}_3|$ and $|x_2,x_4|\leq|\tilde{x}_2,\tilde{x}_4|$. 

It therefore suffices to prove Theorem~\ref{thm: Cat minus one} for $\Hyp^2$, the model for metric spaces of curvature at most $-1$.

\begin{prop}\label{hyp plane} 
The hyperbolic plane $\Hyp^2$ is strongly hyperbolic with parameter $1$. 
\end{prop}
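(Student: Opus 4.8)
The plan is to prove the four-point condition

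\begin{align*}
\exp\big(\tfrac{1}{2}(|x,y|+|z,t|)\big)\leq \exp\big(\tfrac{1}{2}(|x,z|+|y,t|)\big)+\exp\big(\tfrac{1}{2}(|z,y|+|x,t|)\big)
\end{align*}

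directly for four points in $\Hyp^2$, using an explicit model of the hyperbolic plane where distances linearize nicely under the exponential. The natural choice is the upper half-plane or, better, to work through the relation $\cosh|p,q|$ in terms of coordinates, since hyperbolic distance is governed by $\cosh$ and the quantities $\exp(\tfrac{1}{2}|p,q|)$ are what appear. The key algebraic observation I would look for is that $\exp(\tfrac12|p,q|)$, or some closely related quantity, is expressible as a determinant-like or bilinear expression in boundary/coordinate data, so that the inequality becomes a statement about such expressions. In the half-space model, for points written via horocyclic coordinates one expects $2\cosh|p,q| = $ (some positive-definite form), and I would aim to factor $e^{|p,q|/2}$ through a Gram-type expression.

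\emph{First} I would reduce the four-point inequality to an inequality among the three pairwise ``exponentiated'' quantities by choosing a convenient normalization: by the transitivity of $\mathrm{Isom}(\Hyp^2)$ I may place the points in standard position (e.g. send one point to a fixed location, or arrange three of them on a geodesic or at convenient boundary-adapted coordinates). Then I would introduce the abbreviations $a=e^{|x,z|/2+|y,t|/2}$ etc., or more usefully work with $u_{pq}:=e^{|p,q|/2}$, and rewrite the target as $u_{xy}u_{zt}\leq u_{xz}u_{yt}+u_{zy}u_{xt}$. The goal is then to recognize the right-hand side minus the left-hand side as manifestly nonnegative — ideally as a Ptolemy-type inequality. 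Indeed, the shape of the inequality is exactly Ptolemy's inequality for the four quantities $u_{pq}$, so the real content is that the numbers $u_{pq}=e^{|p,q|/2}$ embed the four points isometrically-up-to-Ptolemy into a Euclidean configuration, i.e. that $(\Hyp^2, e^{|\cdot,\cdot|/2})$ is a Ptolemy space.

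\emph{The hard part} will be establishing precisely this Ptolemy inequality with the sharp constant $\e=1$, equivalently showing that the function $\rho(p,q):=e^{|p,q|/2}$ satisfies $\rho(x,y)\rho(z,t)\leq \rho(x,z)\rho(y,t)+\rho(z,y)\rho(x,t)$. My approach would be to produce an explicit map $\Phi$ from $\Hyp^2$ into a Euclidean (or Minkowski) space such that $\rho(p,q)=\|\Phi(p)-\Phi(q)\|$, or such that $\rho(p,q)^2$ is realized by inner products making the four-point Ptolemy inequality follow from the classical Euclidean Ptolemy inequality. The half-space or hyperboloid model should give such a $\Phi$ after computing $e^{|p,q|}$ in coordinates: one expects $e^{|p,q|}$ or $\cosh|p,q|$ to be bilinear in lifted coordinates, and taking the appropriate square root to recover $\rho$. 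The delicate point is controlling the square root so that the bilinear/Gram structure survives and yields exactly Ptolemy, rather than a weaker inequality with a worse constant; getting the constant to be exactly $1$ (so that $\Hyp^2$ is $\log 2$-hyperbolic and $\exp(-(\cdot,\cdot)_o)$ is genuinely a metric on $\bd\Hyp^2$) is where the computation must be sharp.

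\emph{Finally}, once the Ptolemy inequality for $\rho=e^{|\cdot,\cdot|/2}$ is in hand, the four-point condition of strong hyperbolicity with parameter $1$ is immediate, since that condition \emph{is} the Ptolemy inequality for $\rho$ after multiplying through; and by the $\mathrm{CAT}(-1)$ comparison reduction already recorded before the proposition, this establishes Theorem~\ref{thm: Cat minus one} for all $\mathrm{CAT}(-1)$ spaces.
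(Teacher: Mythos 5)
Your reduction is the same as the paper's: in four-point form, strong hyperbolicity with parameter $1$ is precisely Ptolemy's inequality for the quantities $u_{pq}=e^{|p,q|/2}$, and the paper likewise works in the upper half-plane, where $e^{|w_1,w_2|/2}=\big(|w_1-w_2|+|w_1-\overline{w}_2|\big)\big/\big(2\sqrt{\im w_1}\sqrt{\im w_2}\big)$; the conformal factors $\sqrt{\im w_i}$ cancel from every product, leaving Ptolemy's inequality for $\|w_1,w_2\|:=|w_1-w_2|+|w_1-\overline{w}_2|$. The problem is that your proposal stops exactly where the proof has to start, and the route you sketch for the remaining step provably cannot work. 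You propose to find a map $\Phi$ into a Euclidean (or inner product) space with $u_{pq}=\|\Phi(p)-\Phi(q)\|$, so that Ptolemy follows from the classical Euclidean one. No such $\Phi$ exists: first, $u_{pp}=1\neq 0$; second, and decisively even if you only require the identity for distinct points, $u$ violates the triangle inequality. Take $p,q,r$ on a geodesic with $|p,q|=|q,r|=t$, so $|p,r|=2t$; then $u_{pr}=e^{t}>2e^{t/2}=u_{pq}+u_{qr}$ as soon as $t>2\log 2$, so these three values cannot be realized as pairwise distances of three points in \emph{any} metric space, let alone a Euclidean one. This is not a technicality one can engineer around by ``controlling the square root'': the kernel $e^{-(\cdot,\cdot)_o}$ only becomes an honest metric at the boundary (as the paper's remark following the proposition notes, it fails to vanish on the diagonal), so Ptolemy for $e^{|\cdot,\cdot|/2}$ cannot be a consequence of any distance-realizing embedding.

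What the paper does instead is exploit the specific form $\|w_1,w_2\|=|w_1-w_2|+|w_1-\overline{w}_2|$: a sum of two planar Euclidean distances, to the point and to its mirror image. One applies the classical Ptolemy inequality in the plane to each of the configurations obtained by optionally replacing points by their reflections across the real axis, and averages over those choices. Conceptually: if $X$ is a Ptolemaic space and $G$ a finite group of isometries, then $\big\|[x],[y]\big\|=\frac{1}{|G|}\sum_{g\in G}|x,gy|$ is a Ptolemaic metric on $X/G$; here $X=\R^2$ and $G$ is the order-two group generated by conjugation. This reflection-averaging argument is the actual content of the proposition, and it is the idea missing from your proposal.
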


\begin{proof}
We use the upper half-space model $\{w\in \C: \im w>0\}$, with distance given by
\begin{align*}
\cosh |w_1,w_2|=1+\frac{|w_1-w_2|^2}{2(\im w_1)(\im w_2)},
\end{align*}
that is,
\begin{align*}
|w_1,w_2|=2\log \frac{|w_1-w_2|+|w_1-\overline{w}_2|}{2\sqrt{\im w_1}\sqrt{\im w_2}}.
\end{align*}
Using the notation $\|w_1,w_2\|=|w_1-w_2|+|w_1-\overline{w}_2|$, the four-point formulation of strong hyperbolicity with parameter $1$ becomes simply
\begin{align*}
\|w_1,w_3\|\|w_2,w_4\|\leq \|w_1,w_2\|\|w_3,w_4\|+\|w_1,w_4\|\|w_2,w_3\|.
\end{align*}
This brings to mind Ptolemy's inequality in the plane: 
\begin{align*}
|w_1-w_3|\: |w_2-w_4|\leq |w_1-w_2|\:|w_3-w_4|+|w_1-w_4|\:|w_2-w_3|.
\end{align*} 
And indeed, several applications of Ptolemy's inequality yield the desired inequality. Here is a more conceptual formulation. Let $X$ be a metric space which is ptolemaic, i.e., $|x_1,x_3|\: |x_2,x_4|\leq |x_1,x_2|\: |x_3,x_4|+|x_1,x_4|\: |x_2,x_3|$ for all $x_1,x_2,x_3,x_4\in X$, and let $G$ be a finite group of isometries of $X$. (In our case, $X$ is the Euclidean plane and $G$ is the cyclic group of order $2$ generated by the reflection in the $x$-axis.) Then, on the quotient $X/G$, the formula 
\begin{align*}
\big\|[x],[y]\big\|=\frac{1}{|G|}\sum_{g\in G} |x,gy| \qquad \textrm{ for } [x]\neq [y]
\end{align*}
defines a ptolemaic metric. Both the triangle inequality and Ptolemy's inequality are established by averaging over the $G$-orbits of all but one of the variables.
\end{proof}

\begin{rem} Let $X$ be strongly hyperbolic with parameter $\e$, and fix a basepoint $o\in X$. On $X$, let us write $d_\e(x,y)$ for $\exp(-\e\: (x,y)_o)$. The strong hyperbolicity assumption means that $d_\e$ satisfies the triangle inequality; however, $d_\e$ fails to be an actual metric on $X$ because $d_\e(x,x)>0$ for each $x\in X$. Now observe that $d_\e$ fulfills Ptolemy's inequality 
\begin{align*}
d_\e(x_1,x_3)\: d_\e(x_2,x_4)\leq d_\e(x_1,x_2)\: d_\e(x_3,x_4)+d_\e(x_1,x_4)\: d_\e(x_2,x_3),
\end{align*}
for this is simply another way of writing the four-point formulation of strong hyperbolicity with parameter $1$. Consequently, at infinity, the visual \emph{metric} $d_\e=\exp(-\e \:(\cdot,\cdot)_o)$ on $\bd X$ is ptolemaic.

Foertsch and Schroeder have shown \cite[Theorems 1 and 16]{FS} that, for a $\mathrm{CAT}(-1)$ space $X$, the visual metric $d_\e=\exp(-(\cdot,\cdot)_o)$ on the boundary $\bd X$ is ptolemaic. In light of Theorem~\ref{thm: Cat minus one}, the Foertsch - Schroeder result is a particular case of the fact pointed out above.
\end{rem}

As another consequence of Proposition~\ref{hyp plane}, we find the best constant of hyperbolicity (in the sense of Definition~\ref{defn: gromov hyperbolic}) for $\Hyp^2$. This answers a folklore question.  

\begin{cor}\label{cor: optimal}
$\Hyp^2$ is $\log 2$-hyperbolic, and this is optimal.
\end{cor}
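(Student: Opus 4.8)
The plan is to obtain the first assertion for free from the machinery just developed, and to devote the real work to optimality. By Proposition~\ref{hyp plane}, $\Hyp^2$ is strongly hyperbolic with parameter $1$, so Theorem~\ref{properties} applied with $\e=1$ gives at once that $\Hyp^2$ is $(\log 2)$-hyperbolic. It remains to show that no smaller constant works: for every $\delta<\log 2$ I must exhibit points $x,y,z,o$ violating the inequality of Definition~\ref{defn: gromov hyperbolic} with constant $\delta$, that is, a triple whose Gromov-product defect $\min\{(x,z)_o,(z,y)_o\}-(x,y)_o$ exceeds $\delta$.

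The guiding idea is to trace where the constant $\log 2$ came from. In the proof of Theorem~\ref{properties} it arose from the bound $\exp(-(x,y)_o)\leq 2\max\{\exp(-(x,z)_o),\exp(-(z,y)_o)\}$, which is tight precisely when the two maximands are equal and the triangle inequality for $d_1=\exp(-(\cdot,\cdot)_o)$ degenerates to an equality. I therefore look for a configuration in which $(x,z)_o=(z,y)_o$ while $d_1(x,y)$ approaches $d_1(x,z)+d_1(z,y)$. It is cleanest to pass to the boundary. Since $\Hyp^2$ is good (Theorem~\ref{thm: A}), the Gromov product extends continuously to $\bd\Hyp^2$, so the defect of an interior triple converges to that of its boundary limit; it thus suffices to produce boundary triples with defect approaching $\log 2$ and then approximate by interior points. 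With basepoint $o=i$ in the upper half-plane model, the visual metric $d_1$ on $\bd\Hyp^2=\R\cup\{\infty\}$ is, up to a fixed positive multiple, the chordal metric obtained by identifying the boundary with the round circle; because the defect is the logarithm of a ratio of $d_1$-distances, this normalizing constant is irrelevant.

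For the extremal configuration I take boundary points $\xi,\eta$ subtending a small angle $2\alpha$ at the center of the circle and let $\zeta$ be the midpoint of the short arc between them. By symmetry $d_1(\xi,\zeta)=d_1(\zeta,\eta)$, and an elementary chord computation gives
\[
\frac{d_1(\xi,\eta)}{d_1(\xi,\zeta)}=\frac{2\sin\alpha}{2\sin(\alpha/2)}=2\cos(\alpha/2).
\]
Hence the boundary defect equals $\log\big(2\cos(\alpha/2)\big)$, which tends to $\log 2$ as $\alpha\to 0$. Choosing $\alpha$ small, and then interior points $x,z,y$ close enough to $\xi,\zeta,\eta$, produces an interior triple whose defect exceeds any prescribed $\delta<\log 2$, which establishes optimality.

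The only genuine content is isolating this degenerating configuration and checking that the chord ratio tends to $2$; everything else is routine. In particular the passage from boundary to interior is immediate from the continuity of the Gromov product recorded in Theorem~\ref{properties}, and the exact normalization of the visual metric plays no role since only ratios of $d_1$-distances enter the defect.
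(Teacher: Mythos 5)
Your proof is correct and follows essentially the same route as the paper: the first assertion is quoted from Theorem~\ref{properties}, and optimality is obtained by pushing the hyperbolicity inequality to the boundary and evaluating the defect on a symmetric triple of boundary points collapsing to a single point. The paper runs the identical computation in half-plane coordinates, taking boundary points $a$, $-a$, $0$ with basepoint $i$ and letting $a\to 0$, which under the Cayley transform is exactly your chordal configuration with $\zeta$ the arc-midpoint of $\xi$ and $\eta$.
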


\begin{proof}
That $\Hyp^2$ is $\log 2$-hyperbolic follows from Theorem~\ref{properties}. To see that this is best possible, let $\Hyp^2$ be $\delta$-hyperbolic for some $\delta\geq 0$. Sticking to the upper half-space model, the hyperbolic inequality at infinity says that $(a,b)_i\geq \min\{(a,c)_i, (c,b)_i\}-\delta$ whenever $a,b,c\in \R$ are distinct. At infinity, the Gromov product with respect to $i$ is given by
\begin{align*}
(a,b)_i=\log \frac{|a-i|\: |b-i|}{|a-b|}
\end{align*} 
for distinct $a,b\in \R$. For $a> 0$, $b=-a$, $c=0$, we are quickly led to $|a-i|\geq 2\exp(-\delta)$. Hence $\exp \delta\geq 2$, by letting $a$ tend to $0$.
\end{proof}
For comparison, we recall the well-known fact that $\Hyp^2$ is $\log(1+\sqrt{2})$-hyperbolic for the Rips definition, in terms of thin triangles, and this is optimal.


\section{Strong hyperbolicity of the Green metric on a hyperbolic group}
Another example of a strongly hyperbolic metric is Mineyev's `hat' metric from \cite{Min1, Min2}. In fact, our notion of strong hyperbolicity is inspired by the proof of \cite[Theorem 5]{Min2}. Let us recall that Mineyev's hat metric is a (type of) metric defined on any hyperbolic group $\G$, with the following properties: it is $\G$-invariant, quasi-isometric to any word-metric on $\G$, roughly geodesic, and - in our terminology - strongly hyperbolic. Of course, the latter property is the crucial one.

Mineyev's hat metric is a very useful tool, but it has the conceptual downside of being custom-made. One would like a `natural' type of metric doing the same job - that of making hyperbolic groups strongly hyperbolic. We show that this is the case for the Green metric coming from a random walk.

\begin{thm}\label{supergreen}
Let $\G$ be a hyperbolic group. Then the Green metric defined by a symmetric and finitely supported random walk on $\G$ is strongly hyperbolic.
\end{thm}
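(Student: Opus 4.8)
The plan is to verify directly the four-point formulation of strong hyperbolicity for the Green metric, for a suitably small parameter $\e>0$. Write $F(x,y)$ for the probability that the random walk started at $x$ ever reaches $y$, and let $d_G(x,y)=-\log F(x,y)$ be the Green metric. Symmetry of the walk gives $F(x,y)=F(y,x)$, submultiplicativity $F(x,z)\geq F(x,y)F(y,z)$ gives the triangle inequality, and translation invariance gives $F(x,y)=f(x^{-1}y)$ with $f(g)=f(g^{-1})$. By the work of Blach\`ere--Ha\"issinsky--Mathieu, $d_G$ is quasi-isometric to a word metric, so $(\G,d_G)$ is a proper, roughly geodesic, Gromov hyperbolic space, and $\bd\G$ with its Green Gromov product $(x,y)_o=\tfrac12\log\frac{F(x,y)}{F(x,o)F(y,o)}$ is available. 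Since the three-point condition with basepoint $o$ is equivalent to the basepoint-free four-point condition, I will verify
\[
F(x,y)^{-\e/2}F(z,t)^{-\e/2}\leq F(x,z)^{-\e/2}F(y,t)^{-\e/2}+F(z,y)^{-\e/2}F(x,t)^{-\e/2}
\]
for all $x,y,z,t\in\G$.

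First I would record why hyperbolicity alone is insufficient, so as to locate the real content. If $(\G,d_G)$ is $\delta$-hyperbolic and the pairing $(x,y),(z,t)$ realizes the largest of the three sums $d_G(x,y)+d_G(z,t)$, $d_G(x,z)+d_G(y,t)$, $d_G(z,y)+d_G(x,t)$, then the four-point condition only tells us that this sum exceeds the second largest by at most $2\delta$; exponentiating leaves a spurious factor $e^{\e\delta}>1$, which a two-term right-hand side cannot always absorb. Thus a \emph{fixed} additive defect $\delta$ is fatal, and strong hyperbolicity requires that the defect in the additivity of $d_G$ along geodesics \emph{decay}. This is exactly the phenomenon captured by the refined Ancona inequalities.

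The analytic engine is therefore Ancona's inequality together with its strengthening. Ancona's inequality provides a constant $C$ with $F(x,z)\leq C\,F(x,y)F(y,z)$ whenever $y$ lies within bounded distance of a word-geodesic from $x$ to $z$; combined with submultiplicativity this already shows that $d_G$ is additive along geodesics up to the constant $\log C$, which is precisely the fatal defect above. The sharpened form, due to Gou\"ezel, asserts that the ratios $F(x,z)/\big(F(x,y)F(y,z)\big)$ converge, with the associated Martin kernel $K(\cdot,\xi)$ H\"older continuous and the convergence \emph{exponentially fast and uniform} as $y$ moves deep along the geodesic. I would package this as: for $y$ at distance $\geq N$ from both ends, $\big|\log F(x,z)-\log F(x,y)-\log F(y,z)\big|\leq C\rho^{N}$ for some $\rho<1$. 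It is this geometrically decaying defect, rather than a fixed $\log C$, that makes an exact exponential inequality attainable.

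The final step is to assemble the four-point inequality and choose $\e$. By the approximation of four points in a hyperbolic space by a tree, the geodesic quadrilateral on $x,y,z,t$ is thin, so one matching is longest and there is a pair of near-geodesic ``gates'' through which the connecting geodesics pass; applying the sharpened Ancona inequality at these gates rewrites each of the three sums as a common backbone length plus decaying corrections. Substituting into the exponential inequality, the dominant terms reproduce the exact tree identity while every correction has size $O(\e\,\rho^{N})$ with $N$ comparable to the relevant Gromov product, so choosing $\e$ small enough that these corrections are dominated by the genuine two-term slack on the right completes the verification. The main obstacle is precisely this balancing act: converting the \emph{coarse} multiplicativity of the Green function into an \emph{exact} triangle inequality for $\exp(-\e\,(\cdot,\cdot)_o)$, which forces the full strength of the exponentially decaying Ancona estimates and careful tracking of how $N$ depends on the configuration. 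Everything else — symmetry, the triangle inequality, hyperbolicity, and rough geodesicity — is routine by comparison.
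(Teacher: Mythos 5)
Your overall strategy---show that the additive defect of the Green metric decays exponentially and then absorb it by taking $\e$ small---is the right one, and it is in spirit the paper's. But your statement of the key analytic input is wrong, and the error is fatal to your assembly. You package Gou\"ezel's strengthened Ancona inequality as a \emph{three-point} estimate: for $y$ on a word-geodesic from $x$ to $z$, at distance $\geq N$ from both endpoints,
\[
\big|\log F(x,z)-\log F(x,y)-\log F(y,z)\big|\leq C\rho^{N}.
\]
This is false in general. The left-hand side equals $\log\big(F(x,z)/(F(x,y)F(y,z))\big)$, and while Ancona's inequality bounds it by a uniform constant, that constant cannot be improved to $o(1)$ as $y$ moves deeper along the geodesic: a walk from $x$ conditioned to hit $z$ passes \emph{near} $y$ with high probability, but it sidesteps the single vertex $y$ with probability uniformly bounded below (except in tree-like, virtually free situations). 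Concretely, $F(x,z)\geq F(x,y)F(y,z)+P_x(\text{hit } z \text{ avoiding } y)$, and the second term is a definite proportion of $F(x,z)$, so the ratio stays bounded away from $1$. Equivalently: word-geodesics are rough geodesics for the Green metric, but points on them do not become asymptotically \emph{exactly} Green-geodesic, so the defect $|x,y|_G+|y,z|_G-|x,z|_G$ does not decay. Since your ``gates and backbone'' step splits Green distances at points on geodesics, each splitting costs a non-decaying additive constant, which is precisely the ``fatal fixed defect'' you yourself identified at the outset.

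What Gou\"ezel actually proves, and what the paper uses (Lemma~\ref{key est}), is intrinsically a \emph{four-point} estimate:
\[
\bigg|\frac{G(x,y)\,G(z,t)}{G(x,t)\,G(z,y)}-1\bigg|\leq L\exp(-\lambda R)
\]
whenever the word-metric distance between the geodesics $[x,z]$ and $[y,t]$ is at least $R$. This is the quantitative H\"older continuity of Martin kernels that you allude to, but it does not imply your three-point version (taking $t=y$ puts the two geodesics at distance $0$, so the estimate says nothing). With the correct input, the assembly also changes: the paper first proves a clean equivalence (Lemma~\ref{lemma EB}) stating that a hyperbolic space is strongly hyperbolic if and only if $\big||x,y|+|z,t|-|x,t|-|z,y|\big|\leq L e^{-\lambda R}$ whenever $|x,y|+|z,t|-|x,z|-|y,t|\geq R\geq R_0$; all of the small-$\e$ bookkeeping you anticipate lives inside this lemma, with hyperbolicity handling the configurations where the cross-difference is small. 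It then bridges the mismatch of hypotheses---Gou\"ezel's is about word-distance between geodesics, the lemma's is about Green-metric differences---via the elementary inequality $C\,\mathrm{dist}\big([x,z],[y,t]\big)+C\geq |x,y|_G+|z,t|_G-|x,z|_G-|y,t|_G$, which follows from quasi-isometry of the two metrics and the fact that word-geodesics are Green rough geodesics. Replacing your three-point claim by the four-point one and running this reduction repairs the argument; as written, however, the central estimate your proof rests on is not available.
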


We recall the essential facts on the Green metric, and we refer to \cite{BHM} and \cite{Hai} for more details. A probability measure $\mu$ on $\Gamma$ defines a random walk on $\Gamma$ with transition probabilities $p(x,y)=\mu(x^{-1}y)$. The probability measure $\mu$, or the corresponding random walk, is \emph{symmetric} if $\mu(x)=\mu(x^{-1})$ for every $x\in \G$, and \emph{finitely supported} if the support of $\mu$ is a finite generating set for $\Gamma$. 

Let $F(x,y)$ be the probability that the random walk started at $x$ ever hits $y$. The \emph{Green metric} on $\Gamma$, first introduced by Blach\'ere and Brofferio \cite{BB}, is given by
\begin{align*}
	|x,y|_G = -\log F(x,y).
\end{align*}
The name has to do with the \emph{Green function} $G(x,y)=\sum_{n\geq 0}\: \mu^n(x^{-1}y)$, where $\mu^n$ denotes the $n$-th convolution power of $\mu$, counting the expected number of visits of $y$ starting from $x$. It turns out that $F$ and $G$ are proportional: $G(e,e)F(x,y)=G(x,y)$.

The Green metric is $\G$-invariant, quasi-isometric to any word-metric on $\G$, and roughly geodesic. The first two properties are generic, in the sense that they only use the non-amenability of $\G$. The latter property follows from a result of Ancona \cite{Anc} saying that geodesics for the word-metric are rough geodesics for the Green metric, and it relies on the hyperbolicity $\G$. It follows that the Green metric is a hyperbolic metric, but our aim is to show more.

We remark that strong bolicity for the Green metric was first shown by Ha\"issinsky and Mathieu in \cite{HM}. For their short proof it suffices to know Ancona's result that the Martin boundary of the random walk coincides with the Gromov boundary. The fact that the Green metric on a hyperbolic group is a good hyperbolic metric answers a question raised in \cite{Nic}, and for the purposes of \cite{Nic} it means that the Green metric can be used in place of Mineyev's hat metric from \cite{Min1, Min2}. 

\subsection{Proof of Theorem~\ref{supergreen} }The next lemma spells out what else is needed, on top of hyperbolicity, in order to have strong hyperbolicity.

\begin{lem}\label{lemma EB} A hyperbolic space $X$ is strongly hyperbolic if and only if the following holds: 
\begin{itemize}
\item[\textsc{(EB)}] there exist $L,\lambda>0$ and $R_0>0$ such that $\big||x,y|+|z,t|-|x,t|-|z,y|\big|\leq L\exp(-\lambda R)$ whenever $x,y,z,t\in X$ satisfy $|x,y|+|z,t|-|x,z|-|y,t|\geq R\geq R_0$.
\end{itemize}
\end{lem}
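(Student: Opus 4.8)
The plan is to recast both conditions as comparisons among the three \emph{pairing sums}
\[
A=|x,y|+|z,t|,\qquad B=|x,z|+|y,t|,\qquad C=|x,t|+|z,y|
\]
of a quadruple $x,y,z,t\in X$, using that $X$ is already hyperbolic. Fix a quadruple, let $M\ge P\ge Q$ be these three numbers in decreasing order, and set $g_1=M-P$, $g_2=M-Q$, so $0\le g_1\le g_2$; $\delta$-hyperbolicity says the top two pairing sums are within $2\delta$, i.e.\ $g_1\le 2\delta$. Because the strong hyperbolicity inequality is demanded for all quadruples, the only nontrivial instance is the one with the largest pairing sum on the left; dividing it by $\exp(\tfrac12\e P)$ shows that strong hyperbolicity with parameter $\e$ is equivalent to the \emph{reduced inequality}
\[
\exp(\tfrac12\e\,g_1)-1\le \exp\!\big(-\tfrac12\e\,(g_2-g_1)\big)
\]
holding for every quadruple. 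On the other side, relabelling the points permutes $A,B,C$ arbitrarily (the map $S_4\to S_3$ is onto), so in \textsc{(EB)} any pairing sum may be assigned any role; taking the maximum to be $|x,y|+|z,t|$ and the minimum to be $|x,z|+|y,t|$ turns \textsc{(EB)} into the statement that $g_1\le L\exp(-\lambda g_2)$ whenever $g_2\ge R_0$.

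For the direction strong hyperbolicity $\Rightarrow$ \textsc{(EB)} I would assume the reduced inequality. From $e^s-1\ge s$ it gives $\tfrac12\e\,g_1\le \exp(-\tfrac12\e(g_2-g_1))$, and since $g_1\le 2\delta$ yields $g_2-g_1\ge g_2-2\delta$, I get $g_1\le \tfrac{2}{\e}e^{\e\delta}\exp(-\tfrac12\e\,g_2)$. Given a quadruple satisfying the \textsc{(EB)} hypothesis $A-B\ge R\ge R_0$, choosing $R_0>2\delta$ forces, via hyperbolicity, the pairing sum $B=|x,z|+|y,t|$ to be the minimum $Q$, so the other two are the top two and $|A-C|=g_1$, while $g_2\ge A-Q=A-B\ge R$. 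Hence $|A-C|\le \tfrac{2}{\e}e^{\e\delta}\exp(-\tfrac12\e R)$, which is \textsc{(EB)} with $\lambda=\e/2$ and $L=\tfrac{2}{\e}e^{\e\delta}$.

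The substantial part is the converse, where I must produce a single $\e>0$ making the reduced inequality follow from \textsc{(EB)}; I would split on the size of $g_2$. If $g_2\le R_0$, then $g_2-g_1\le R_0$ bounds the right-hand side below by $e^{-\e R_0/2}$, while $g_1\le 2\delta$ bounds the left-hand side above by $e^{\e\delta}-1$, so this range is fine once $e^{\e\delta}-1\le e^{-\e R_0/2}$, which holds for all small $\e$. If $g_2\ge R_0$, then \textsc{(EB)} gives $g_1\le Le^{-\lambda g_2}$; combining this with the linear estimate $e^{\tfrac12\e g_1}-1\le \tfrac12\e c\,g_1$ (valid for $\tfrac12\e g_1\le \e\delta$, with $c=c(\e)\to1$ as $\e\to0$) and with $\exp(-\tfrac12\e(g_2-g_1))\ge e^{-\e g_2/2}$ reduces the reduced inequality to $\tfrac12\e cL\,e^{-\lambda g_2}\le e^{-\e g_2/2}$. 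Here is the main obstacle: the decay rate $\lambda$ from \textsc{(EB)} must beat the rate $\e/2$ coming from strong hyperbolicity, which forces $\e<2\lambda$; with that choice the inequality becomes $\tfrac12\e cL\le e^{(\lambda-\e/2)g_2}$, whose worst case $g_2=R_0$ requires only $\tfrac12\e cL\le e^{(\lambda-\e/2)R_0}$. As $\e\to0$ the left-hand side tends to $0$ and the right-hand side to $e^{\lambda R_0}>0$, so every sufficiently small $\e$ (also meeting the bounded-gap condition) works. The steps I would watch most carefully are the relabelling that lets \textsc{(EB)} reach the extremal pairing sums, and the joint calibration of $\e$ against $\delta$, $\lambda$, and $R_0$.
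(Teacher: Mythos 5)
Your proof is correct and takes essentially the same route as the paper's: your reduced inequality in $g_1=M-P$, $g_2=M-Q$ is precisely the paper's reformulation $1\leq \exp(-\e A)+\exp(-\e B)$ with $A\geq B\geq 0$ (where $A=g_2/2$, $B=g_1/2$), and both arguments then split on whether the defect is below or above $R_0$, invoking $\delta$-hyperbolicity in the small range and \textsc{(EB)} with $\e\leq\min\{2\lambda,\dots\}$ in the large range, and conversely extract \textsc{(EB)} with decay rate $\lambda=\e/2$. The differences are cosmetic: you make the relabelling argument explicit and use $g_1\leq 2\delta$ in the forward direction, where the paper instead works from a threshold $A_0$ supplied by the asymptotics of the logarithm.
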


The \textsc{(EB)} condition may be interpreted as an exponentially strong bolicity, since merely knowing that $|x,y|+|z,t|-|x,t|-|z,y|\to 0$ as $|x,y|+|z,t|-|x,z|-|y,t|\to \infty$ already implies strong bolicity (for hyperbolic spaces which are roughly geodesic).

\begin{proof}
For the sake of notational simplicity, let us write 
\begin{align*}
A:=\tfrac{1}{2}\big(|x,y|+|z,t|-|x,z|-|y,t|\big), \qquad B:=\tfrac{1}{2}\big(|x,y|+|z,t|-|x,t|-|z,y|\big).
\end{align*}

\smallskip
($\Leftarrow$) The four-point condition for strong hyperbolicity with parameter $\e$ amounts to showing that, for some $\e>0$, we have $1 \leq \exp(-\e A)+\exp(-\e B)$ . We may assume that $A,B\geq 0$, and $A\geq B$. We rewrite the desired inequality as $\e B\leq -\log (1-\exp(-\e A))$. As $a\leq -\log(1-a)$ for $a\in [0,1)$, it suffices to obtain $\e B\leq \exp(-\e A)$. When $A\geq R_0/2$, this can be achieved for $\e\leq \min\{2\lambda,2/L\}$ by using \textsc{(EB)}. When $A<R_0/2$, we use the assumption that $X$ is hyperbolic: $B=\min\{A,B\}\leq \delta$, a hyperbolicity constant of $X$. So it suffices to have $\e\delta\leq \exp(-\e R_0/2)$, and this can be achieved for, say, $\e\leq \min\{1/(2\delta), (2\log 2)/R_0\}$.

\smallskip
($\Rightarrow$) Strong hyperbolicity with parameter $\e$ yields $1 + \exp(-\e A)\geq \exp(-\e B)\geq 1-\exp(-\e A)$. When $A\geq 0$, this means that $-\log (1 + \exp(-\e A))\leq \e B\leq -\log(1-\exp(-\e A))$. Now $\log(1+a)\leq 2a$ and $ -\log(1-a)\leq 2a$ for small enough $a\in [0,1)$. Thus, there is $A_0>0$ such that $|\e B|\leq 2\exp(-\e A)$ whenever $A\geq A_0$. We deduce \textsc{(EB)} with $L=4/\e$, $\lambda=\e/2$, $R_0=2A_0$.
\end{proof}

The following is the key estimate for the proof of Theorem~\ref{supergreen}. It is a particular instance of Gou\"ezel's strong uniform Ancona inequalities \cite[Definition 2.8, Theorem 2.9]{Gou}. It can also be deduced from an adaptation of \cite[Lemma 3.2]{INO}, and that was, in fact, our original approach before becoming aware of the very recent \cite{Gou}. 

\begin{lem}\label{key est}
There exist $L,\lambda>0$ and $R_0>0$ such that
\begin{align*}
\bigg|\frac{G(x,y)\:G(z,t)}{G(x,t)\: G(z,y)}-1\bigg|\leq L\exp(-\lambda R)
\end{align*} 
whenever the distance between geodesics $[x,z]$ and $[y,t]$ is at least $R\geq R_0$.
\end{lem}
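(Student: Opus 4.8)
The plan is to derive the estimate from Ancona's inequality together with its strong, uniform refinement. I would begin by recording the arithmetic meaning of the quantity. Substituting $G(x,y)=G(e,e)F(x,y)$ and $|x,y|_G=-\log F(x,y)$, the normalizing factors $G(e,e)$ cancel in pairs and
\[
\log\frac{G(x,y)\,G(z,t)}{G(x,t)\,G(z,y)}=|x,t|_G+|z,y|_G-|x,y|_G-|z,t|_G .
\]
Thus the lemma says precisely that the four-point defect of the Green metric decays exponentially in the separation of $[x,z]$ and $[y,t]$, which is exactly what feeds the \textsc{(EB)} condition of Lemma~\ref{lemma EB}. The standing hypotheses are in force: $\G$ is non-elementary hyperbolic, hence non-amenable, and $\mu$ is symmetric and finitely supported, so the Green function decays exponentially and, by Ancona's theorem \cite{Anc}, word-geodesics are rough geodesics for $|\cdot,\cdot|_G$. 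In particular the word-geodesics $[x,z]$ and $[y,t]$ may be used interchangeably with their Green counterparts up to a bounded error.

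The geometric core is that, when the two geodesics are $R$-separated, all four of the geodesics $[x,y]$, $[z,t]$, $[x,t]$, $[z,y]$ are forced through a common \emph{bridge}. Concretely, let $u\in[x,z]$ and $v\in[y,t]$ realise the distance between the two geodesics, so $|u,v|\geq R$, and let $w$ be a point midway along $[u,v]$; by $\delta$-thinness of geodesic triangles in $\G$, each of the four geodesics above passes within bounded distance of $w$, and $w$ lies at distance at least $R/2-O(\delta)$ from each of $x,y,z,t$. Applying the classical Ancona inequality at $w$ gives factorizations $G(x,y)\asymp G(x,w)\,G(w,y)$, and likewise for the other three pairs. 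The decisive algebraic point is that the factors $G(x,w)$, $G(z,w)$, $G(w,y)$, $G(w,t)$ each occur once in the numerator and once in the denominator of the cross-ratio, so after cancellation only the four Ancona error terms survive; to leading order the ratio is therefore $1$.

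What turns this into the exponential bound is the \emph{strong} form of Ancona's inequality, which asserts that the error in $G(x,y)/\bigl(G(x,w)\,G(w,y)\bigr)$ stabilizes, and that the residual discrepancies between the error terms attached to $x$ (respectively $y$, $z$, $t$) decay geometrically in the distance from $w$ to the endpoints. Since that distance is of order $R$, the surviving errors are $O(\exp(-\lambda R))$, and collecting them yields the asserted inequality with uniform constants $L,\lambda,R_0$. This quantitative input is precisely Gou\"ezel's strong uniform Ancona inequality \cite[Theorem~2.9]{Gou}; alternatively it can be extracted from the argument of \cite[Lemma~3.2]{INO}.

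The main obstacle is exactly this last point. The plain Ancona inequality $G(x,y)\asymp G(x,w)\,G(w,y)$ is comparatively soft: the lower bound is automatic from the first-passage decomposition, and the upper bound follows from hyperbolicity via a barrier argument showing the walk must cross the bridge near $w$. Upgrading the multiplicative constant from merely bounded to $1+O(\exp(-\lambda R))$ is the genuinely hard part, as it requires quantitative control of how the hitting kernel seen from $w$ depends on a distant starting point. I would invoke \cite{Gou} for this rather than reprove it, leaving only the geometric bookkeeping — verifying that $R$-separation of $[x,z]$ and $[y,t]$ places the four pairs into the configuration covered by the strong inequality — to be checked by hand.
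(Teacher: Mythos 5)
Your proposal is correct and takes essentially the same route as the paper: the paper itself does not prove this lemma from scratch, but presents it as a particular instance of Gou\"ezel's strong uniform Ancona inequalities \cite[Definition 2.8, Theorem 2.9]{Gou} (with an adaptation of \cite[Lemma 3.2]{INO} as an alternative source), which is precisely the external input you invoke. Your bridge-point setup and the cancellation heuristic amount to the ``geometric bookkeeping'' reducing the stated configuration to Gou\"ezel's, and you correctly identify that the upgrade from the classical Ancona comparison $G(x,y)\asymp G(x,w)\,G(w,y)$ to the $1+O(\exp(-\lambda R))$ bound is the hard step that must be quoted rather than reproved.
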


We emphasize that, in the above lemma, geodesics are taken with respect to the word-metric on $\G$. Let us derive condition \textsc{(EB)} for the Green metric on $\G$. On the one hand, as $|\log a|$ is asymptotic to $|a - 1|$ for $a\to 1$, the conclusion of Lemma~\ref{key est} can be stated as 
\begin{align*}
\big||x,y|_G+|z,t|_G-|x,t|_G-|z,y|_G\big|=\bigg|\log \frac{G(x,y)\:G(z,t)}{G(x,t)\: G(z,y)}\bigg|\leq L\exp(-\lambda R),
\end{align*}
up to increasing $R_0$ if necessary. (Compare, at this point, with \cite[Theorem 32d]{Min1}.)

On the other hand, we claim that there is some spatial constant $C>0$ such that
\begin{align*}
C\:\mathrm{dist}\big([x,z], [y,t]\big)+C\geq |x,y|_G+|z,t|_G-|x,z|_G-|y,t|_G
\end{align*}
where, once again, the left hand side is in terms of the word metric. Let $p$ and $q$ be points on $[x,z]$, respectively $[y,t]$, with $|p,q|=\mathrm{dist}\big([x,z], [y,t]\big)$. (See the figure below.) We have $|p,q|_G\leq C_1|p,q|+C_1$ for some spatial constant $C_1>0$, by the quasi-isometry of the word and the Green metrics. Also, there is a spatial constant $C_2>0$ such that
\begin{align*}
|x,p|_G+|p,z|_G\leq |x,z|_G+C_2, \qquad |y,q|_G+|q,t|_G\leq |y,t|_G+C_2
\end{align*}
since geodesics for the word metric are rough geodesics for the Green metric. It follows that
\begin{align*}
|x,y|_G+|z,t|_G&\leq \big(|x,p|_G+|p,q|_G+|q,y|_G\big)+\big(|z,p|_G+|p,q|_G+|q,t|_G\big)\\
&\leq |x,z|_G+|y,t|_G+ 2|p,q|_G+2C_2\\
&\leq |x,z|_G+|y,t|_G+2C_1 |p,q|+ (2C_1+2C_2)
\end{align*}
and the claim is proved. This completes the verification of condition \textsc{(EB)} for the Green metric.
\begin{figure}[h!]
  \begin{tikzpicture}[thick,>=stealth,scale=1]
    \coordinate [label=left:$x$] (X) at (1,4);
    \coordinate [label=left:$z$] (Z) at (0,0);
    \coordinate [label=right:$t$] (T) at (9,0.5);
    \coordinate [label=right:$y$] (Y) at (8,4);
    \coordinate [label=left:$p$] (P) at (2,2);
    \coordinate [label=right:$q$] (Q) at (6,2);

    \draw (X) .. controls +(-57:.5) and +(up:.4) .. (P) node [pos=.4,left] {$$} .. controls +(down:.4) and +(35:2) .. (Z);
    \draw (Z) .. controls +(32:2) and +(left:2) .. ($(P)!0.55!(Q)+(down:.2)$) .. controls +(right:2) and +(165:1) .. (T) node [pos=0.53,below] {$$};
    \draw (T) .. controls +(161:1) and +(down:0.4) .. (Q) node [pos=.5,above] {$$} .. controls +(up:.7) and +(-145:1) .. (Y);
    \draw (Y) .. controls +(-150:1) and +(right:2.1) .. ($(P)!0.4!(Q)+(up:.2)$) .. controls +(left:1.7) and +(-52:1) .. (X) node [pos=.7,right] {$$};
    \draw [thin] (P) -- (Q);

    \fill (P) circle (1.7pt);
    \fill (Q) circle (1.7pt);

  \end{tikzpicture}
\end{figure}

\subsection{Harmonic measure} We will now explain a consequence of Theorem~\ref{supergreen} which concerns the harmonic measure on the boundary. In what follows, we write $(\cdot,\cdot)_G$ for the Gromov product with respect to the Green metric, and having the identity element $e\in \G$ as the basepoint.

The random walk $(Z_n)$ started at $e\in \G$ converges almost surely to a boundary point $Z_\infty$. The \emph{harmonic measure} $\nu$ is the probability measure on $\bd \G$ defined by the condition that $\nu(A)$ is the probability that $Z_\infty$ is in $A\subseteq \bd \G$. By \cite[Section 3.4]{BHM}, see also the proof of Proposition 3.6 in \cite{Hai}, one knows that $\nu$ is a $\G$-conformal measure:
\begin{align}\label{RNd}
\frac{\mathrm{d}g^*\nu}{\mathrm{d}\nu}(\xi) =\exp\beta_G(e,g^{-1};\xi)
\end{align}
where $\beta_G$ is the Busemann function with respect to the Green metric. Recall that
\begin{align*}
\beta_G(e,g;\xi)=2(g,\xi)_G-|g|_G=\lim_{x\to \xi} \big(|x,e|_G-|x,g|_G\big).	
\end{align*}
In \cite{BHM} and \cite{Hai}, the existence of the above limit follows from the identification of the Gromov boundary with the Martin boundary of the random walk. Herein, we see it as a manifestation of the good hyperbolicity of the Green metric.

On the other hand, good hyperbolicity of the Green metric means that the boundary visual structure induced by the Green metric is conformal, as explained in Section~\ref{sec: good spaces}. We endow $\bd \G$ with a visual metric $d_\e=\exp(-\e\: (\cdot,\cdot)_G)$ for small enough $\e>0$, and we let $\mu$ denote the normalized Hausdorff measure (independently of $\e$). Then $\mu$ is $\G$-conformal, with
\begin{align}\label{RNHaus}
\frac{\mathrm{d}g^*\mu}{\mathrm{d}\mu}(\xi) =\exp\beta_G(e,g^{-1};\xi)
\end{align}
since the Hausdorff dimension of $d_\e$ is $1/\e$ by \cite[Theorem 1.1]{BHM}. 

Now \cite[Theorem 2.7]{BHM} says that any two $\G$-quasi-conformal measures on $\bd\G$ are equivalent ergodic measures, with Radon - Nikodym derivative bounded above and below by positive constants. Equations \eqref{RNd} and \eqref{RNHaus} imply that the Radon - Nikodym derivative $\mathrm{d}\nu/\mathrm{d}\mu$ is $\G$-invariant, hence constant a.e. by ergodicity. We conclude:

\begin{cor}\label{cor: two equal measures}
Let $\G$ be a hyperbolic group. Then the harmonic measure on $\bd \G$ defined by a symmetric and finitely supported random walk on $\G$ equals the Hausdorff probability measure defined by any Green visual metric $d_\e=\exp(-\e\: (\cdot,\cdot)_G)$.
\end{cor}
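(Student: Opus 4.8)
The plan is to exhibit both measures as $\G$-conformal measures carrying the \emph{same} conformal cocycle, and then to deduce their equality from the ergodic rigidity of such measures recorded in \cite[Theorem 2.7]{BHM}.

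First I would use strong hyperbolicity to set up the boundary geometry. By Theorem~\ref{supergreen} the Green metric is strongly hyperbolic, say with parameter $\e$, so Theorem~\ref{properties} shows it is $\e$-good: the Gromov product $(\cdot,\cdot)_G$ extends continuously to $\G\cup\bd\G$, the Busemann function $\beta_G$ is well-defined, and $d_\e=\exp(-\e\:(\cdot,\cdot)_G)$ is a genuine visual metric on $\bd\G$. The measure-theoretic conformality of Section~\ref{sec: good spaces} then applies to the geometric action of $\G$ on itself.

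Next I would compute the two conformal cocycles and check that they coincide. For the harmonic measure, equation \eqref{RNd} gives the cocycle $\exp\beta_G(e,g^{-1};\xi)$. For the normalized Hausdorff measure $\mu$ of $d_\e$, the general formula from Section~\ref{sec: good spaces} gives $\exp(\e D_\e\:\beta_G(e,g^{-1};\xi))$, where $D_\e$ is the Hausdorff dimension of $d_\e$. The decisive input is the dimension identity $D_\e=1/\e$ from \cite[Theorem 1.1]{BHM}, which makes $\e D_\e=1$ and so reduces this cocycle to exactly the one in \eqref{RNd}; this is equation \eqref{RNHaus}.

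Finally I would run the ergodicity argument. Both $\nu$ and $\mu$ are $\G$-quasi-conformal, so \cite[Theorem 2.7]{BHM} guarantees that they are mutually absolutely continuous and that $\G$ acts ergodically. Writing $\varphi=\mathrm{d}\nu/\mathrm{d}\mu$, the equality of the two cocycles forces $\varphi\circ g=\varphi$ for all $g\in\G$, so $\varphi$ is $\G$-invariant and hence constant almost everywhere by ergodicity; since $\nu$ and $\mu$ are probability measures, this constant is $1$ and $\nu=\mu$. The main point to watch is that the two cocycles must agree \emph{exactly}, not merely up to bounded multiplicative error: quasi-conformality alone would leave $\varphi$ only boundedly quasi-invariant, and ergodicity could not then pin it down. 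What closes the argument is the precise dimension value $D_\e=1/\e$, which removes any stray constant and yields a genuinely $\G$-invariant Radon--Nikodym derivative.
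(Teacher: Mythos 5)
Your proposal is correct and follows the paper's own argument essentially verbatim: the same two conformal cocycle computations (equation \eqref{RNd} for the harmonic measure, and the Hausdorff-measure cocycle collapsing to the same expression via $D_\e=1/\e$ from \cite[Theorem 1.1]{BHM}), followed by the same appeal to \cite[Theorem 2.7]{BHM} for ergodicity and the constancy of the Radon--Nikodym derivative. Your closing remark about why exact (rather than bounded) agreement of the cocycles is indispensable is a fair gloss on the point the paper makes implicitly.
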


\begin{ack}
We thank Peter Ha\"issinsky for useful comments. B.N. acknowledges support from the Alexander von Humboldt Foundation.
\end{ack}

\medskip

\end{document}